\newtheorem{theorem}{Theorem}
\newtheorem{lemma}[theorem]{Lemma}
\newtheorem*{definition*}{Definition}
\newtheorem{remark}[theorem]{Remark}
\numberwithin{equation}{section}
\numberwithin{theorem}{section}
\keywords{counting, linear 3-uniform hypergraphs,  multiplicative bases, primitive sets, property $\mathcal{P}_h$}
\title[The counting version of a problem of Erd\H{o}s]%
  {The counting version of a problem of Erd\H{o}s}
\author{P\'eter P\'al Pach}
\email{ppp@cs.bme.hu}
\address{MTA-BME Lend\"ulet Arithmetic Combinatorics Research Group, Department of Computer Science and Information Theory, Budapest
  University of Technology and Economics, 1117 Budapest, Magyar tud\'osok
  k\"or\'utja 2., Hungary}
\author{Rich\'ard Palincza}
\email{pricsi@cs.bme.hu}
\address{MTA-BME Lend\"ulet Arithmetic Combinatorics Research Group, Department of Computer Science and Information Theory, Budapest
  University of Technology and Economics, 1117 Budapest, Magyar tud\'osok
  k\"or\'utja 2., Hungary}
\thanks{}
\begin{document}

\begin{abstract}

A set $A$ of natural numbers possesses property $\mathcal{P}_h$, if there are no distinct elements $a_0,a_1,\dots ,a_h\in A$ with $a_0$ dividing the product $a_1a_2\dots a_h$. Erd\H{o}s  determined the maximum size of a subset of $\{1,\ldots, n\}$ possessing property $\mathcal{P}_2$. More recently, Chan, Gy\H{ori} and S\'ark\"ozy \cite{CGS} solved the case $h=3$, finally the general case  also got resolved by Chan \cite{Chan}, the maximum size is $\pi(n)+\Theta_h(\frac{n^{2/(h+1)}}{(\log n)^{2}})$. 

In this note we consider the counting version of this problem and show that the number of subsets of $\{1,\ldots, n\}$ possessing property $\mathcal{P}_2$ is $T(n)\cdot e^{\Theta(n^{2/3}/\log n)}$
 for a certain function $T(n)\approx (3.517\dots)^{\pi(n)}$. For $h>2$ we prove that the number of subsets possessing property  $\mathcal{P}_h$ is $T(n)\cdot e^{\sqrt{n}(1+o(1))}$.  
 
This is a rare example in which the order of magnitude of the lower order term in the exponent is also determined.

\end{abstract}

\date{\today}
\maketitle

\section{Introduction}
We say that a set $A\subseteq \mathbb{N}$  \emph{possesses property $\mathcal{P}_h$}, if there are no distinct elements $a_0,a_1,\dots ,a_h\in A$ with $a_0$ dividing the product $a_1a_2\dots a_h$. Let us denote  the set of those subsets of a set $S$ that possess property $\mathcal{P}_h$ by $\mathcal{P}_h(S)$.  

Property $\mathcal{P}_h$ was introduced by  Erd\H{o}s~\cite{Erd38} back in 1938, who studied the maximum size of a subset of $[n]:=\{1,\ldots, n\}$ possessing property $\mathcal{P}_2$. He proved that the extremal size is $\pi(n)+\Theta(\frac{n^{2/3}}{(\log n)^{2}})$, that is, besides showing that the main term is $\pi(n)$, he could determine the lower order term up to a constant factor.

More recently, Chan, Gy\H{o}ri and S\'ark\"ozy \cite{CGS} determined the lower order term for $h=3$ too, then finally Chan \cite{Chan} resolved the general case $h>1$ by showing that the extremal size is $\pi(n)+\Theta_h(\frac{n^{2/(h+1)}}{(\log n)^{2}})$. Chan even studied the dependence of the constant on $h$ hidden in the $\Theta_h$ notion (assuming $n$ is sufficiently larger than $h$). This dependence was determined (up to a constant factor) by S\'andor and the first author \cite{PS} establishing that for $n$ sufficiently larger than $h$ the extremal size is $\pi(n)+\Theta(\frac{n^{2/(h+1)}}{(\log n)^{2}})$.

Given now the satisfying answer on how large a subset of $[n]$ possessing property $\mathcal{P}_h$ could be, a natural next step is to estimate how many subsets of $[n]$ possess property $\mathcal{P}_h$, that is, how large $\mathcal{P}_h([n])$ is. This is the question we are concerned about in this paper.

Indeed, enumerating subsets of $[n]$ satisfying various properties was initiated by Cameron and Erd\H{o}s~\cite{CamErd} in the 80s. In particular they considered the enumeration problem of primitive sets (note that a set is primitive if it possesses property $\mathcal{P}_1$). The nature of this problem, and also the applied techniques, obtained results are different from the case of $\mathcal{P}_h$ with $h\geq 2$. For more on the case of primitive sets we refer to the papers  \cite{Angelo,LPP,McNew, Vijay}.

Another related question is enumerating multiplicative Sidon subsets of $[n]$, also initiated by Cameron and Erd\H{o}s. Recently, Liu and the first author \cite{LP} proved that the number of multiplicative Sidon subsets of $[n]$ is $R(n)\cdot 2^{\Theta(\frac{n^{3/4}}{(\log n)^{3/2}})}$ for a certain function $R(n)\approx 2^{1.815\pi(n)}$ which they specified. That is, the order of magnitude of the lower order term in the exponent is also determined.

\subsection{Main result}

Let $H_h(n) := |\mathcal{P}_h([n])|$ denote the number of those subsets of $[n]$ that possess property $\mathcal{P}_h$.

\begin{theorem}\label{count}
There exist positive constants $c_1$ and $c_2$ such that, for the number of those subsets of $[n]$ that possess property $\mathcal{P}_2$, we have
$$T(n)\cdot e^{c_1n^{2/3}/\log n}  \leq H_2(n)\leq T(n)\cdot e^{c_2n^{2/3}/\log n},$$
if $n$ is sufficiently large.

Let $h\geq 3$ be an integer. 
For large enough $n$, the number of subsets of
$[n]$ possessing property $\mathcal{P}_h$ satisfies
$$T(n)\cdot e^{\sqrt{n}}e^{-11\sqrt{n}\log\log n/\log n}  \leq H_h(n)\leq T(n)\cdot e^{\sqrt{n}}e^{4\sqrt{n}\log\log n/\log n},$$
where
$$T(n):=\prod\limits_{\substack{\sqrt{n}<p\leq n, \\ p \text{ prime} }}  ([n/p]+1).$$

\end{theorem}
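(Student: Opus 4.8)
The plan is to split each $a\in[n]$ according to its unique prime factor exceeding $\sqrt n$: since $p^2>n$ for every prime $p>\sqrt n$, each $a\le n$ is either $\sqrt n$-smooth or of the form $a=mp$ with $p>\sqrt n$ prime and $1\le m\le[n/p]<\sqrt n$ uniquely determined. Writing $C_p=\{mp:1\le m\le[n/p]\}$ for the \emph{column} of $p$, the $C_p$ (over primes $\sqrt n<p\le n$) are pairwise disjoint and their complement is the set $S$ of smooth numbers. The starting point is the elementary observation that any $A\subseteq[n]$ meeting each column in at most one element and containing no smooth number automatically possesses $\mathcal P_h$: if $a_0\mid a_1\cdots a_h$, the big prime $p\mid a_0$ must divide some $a_i$, i.e.\ $a_i\in C_p$ as well, forcing a second element of $C_p$ into $A$. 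There are exactly $\prod_p([n/p]+1)=T(n)$ such sets, giving both the main term and the trivial bound $H_h(n)\ge T(n)$. All the content of the theorem is to pin down the multiplicative \emph{correction} produced by the two legal ways of enlarging such a backbone: taking two or more elements from one column, and adjoining smooth numbers.

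\textbf{Reduction to a divisibility hypergraph.} I would first record when an enlargement is legal. After cancelling the big primes (which are coprime to every cofactor $m<\sqrt n$ and to every smooth number), each forbidden $\mathcal P_h$-pattern projects to a divisibility relation $b_0\mid b_1\cdots b_j$ with $1\le j\le h$ among the small data, the unused big primes supplying the remaining factors for free. Thus one is led to the hypergraph $\mathcal H$ whose vertices are the slots $(p,m)$ (the element $mp$) together with the smooth numbers, and whose edges are these forbidden configurations, and $H_h(n)$ is essentially the number of independent sets of $\mathcal H$. The decisive point is that the edges are \emph{not} confined to a single column: for instance $mp\mid(m'p)(m''p')$ reduces to $m\mid m'm''$ and links the two columns $p,p'$. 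Consequently the correction is emphatically \emph{not} a product over columns, and $T(n)$ is exactly the contribution of the independent sets using at most one vertex per column and no smooth vertex.

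\textbf{The two regimes.} The binding relation is the quadratic one $b_0\mid b_1b_2$, present for every $h\ge 2$, which makes a $3$-uniform subhypergraph; for $h\ge3$ the further relations $b_0\mid b_1b_2b_3,\dots$ add higher-uniform edges that suppress the independent-set count. For $h=2$ I would locate the dominant independent-set mass on a $3$-uniform subhypergraph that can be arranged to be \emph{linear} (any two forbidden triples meet in at most one vertex) — supported on slots with $p$ near $\sqrt n$ and semiprime-type elements — and count its independent sets at $e^{\Theta(n^{2/3}/\log n)}$, exhibiting an explicit independent family for the lower bound and a hypergraph-container argument for the upper bound, where linearity is exactly what forces the two exponents to meet. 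For $h\ge3$ the additional edges cut this down, and the surviving leading contribution $e^{\sqrt n(1+o(1))}$ comes from an antichain-type family of small augmentations whose independent-set count must be evaluated to leading order with constant exactly $1$.

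\textbf{Main obstacle.} The heart of the matter, and the source of the $\sqrt n\,\log\log n/\log n$ error terms, is precisely the cross-column relations $m\mid m'm''$ illustrated above: they prevent treating the columns independently (a naive per-column enrichment would overshoot to $e^{\Theta(n/\log n)}$), so the whole analysis must be carried out on the single global hypergraph $\mathcal H$. I expect the bulk of the work to lie in enumerating its independent sets sharply enough to isolate the constant $1$ in front of $\sqrt n$ for $h\ge3$ and the order $n^{2/3}/\log n$ for $h=2$, and in showing that configurations with many multiply-occupied columns or many adjoined smooth numbers are too sparse to disturb the leading exponent.
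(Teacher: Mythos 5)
Your setup correctly identifies the main term: the ``one element per column $C_p$, $p>\sqrt n$ prime'' backbone is exactly the paper's construction giving $H_h(n)\ge T(n)$, and you rightly see that the whole problem is to control the correction factor. But beyond that the proposal is an outline with the decisive steps missing, and two of its concrete suggestions point away from arguments that actually work. For the $h=2$ lower bound, the required $e^{\Theta(n^{2/3}/\log n)}$ gain does \emph{not} live on ``semiprime-type elements with $p$ near $\sqrt n$'': a set of products of two primes near $\sqrt n$ closed against $a_0\mid a_1a_2$ forces every edge of the associated graph on the $\sim\sqrt n/\log n$ primes to have an endpoint of degree one, so you only get $e^{O(\sqrt n)}$ such augmentations. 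The paper instead adjoins products $pqr$ of \emph{three} primes from $(n^{1/3}/2,n^{1/3})$ indexed by the edges of a linear $3$-uniform hypergraph on the $\sim n^{1/3}/\log n$ medium primes; linearity (plus the large-prime columns being untouched) gives property $\mathcal P_2$, and Wilson's count of Steiner triple systems supplies $2^{\Theta(v^2\log v)}=e^{\Theta(n^{2/3}/\log n)}$ choices. You mention linear $3$-uniform hypergraphs but attach them to the wrong scale of primes, which changes the exponent.

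The upper bounds are where the real gap lies: you defer them entirely to ``a hypergraph-container argument,'' with no supersaturation input and no reason the containers would land at $e^{O(n^{2/3}/\log n)}$ rather than losing logarithmic factors. The paper's route is different and much more elementary: by Lemma\refl{MB} there is a multiplicative basis $B=P\cup X$ of order $h$ with $P$ the large primes and $|X|\ll n^{2/(h+1)}/(\log n)^2$, and Lemma\refl{inj} injects any $A$ with property $\mathcal P_h$ into $B$ so that $\varphi(a)\mid a$; counting preimages column-by-column over $P$ gives $T(n)$ and the $X$-part contributes only $e^{O(n^{2/(h+1)}/\log n)}$. For $h\ge3$ you also never identify where $e^{\sqrt n}$ comes from: it is the product $\prod_{\sqrt n/\log n<p\le\sqrt n}([n/p]+1)\approx \sqrt n^{\,2\sqrt n/\log n}$, i.e.\ extending the one-per-column structure to primes just below $\sqrt n$, with the $\sqrt n\log\log n/\log n$ error terms coming from a Mertens-type estimate excluding elements with two medium prime factors. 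Your ``antichain-type family of small augmentations'' does not pin this down, so neither the constant $1$ in the exponent nor the stated error terms are obtained.
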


A more explicit formula for the function $T(n)$ is 
$$T(n)=e^{O(n^{1/2})}\cdot \prod\limits_{i=1}^{n^{1/2}}(1+1/i)^{\pi(n/i)}.$$ 
 A more crude estimate is $T(n)=({\alpha}+o(1))^{\pi(n)}$, where
\begin{equation*}
	\alpha:=\prod\limits_{i=1}^\infty(1+1/i)^{1/i}=3.517\dots
\end{equation*}

Theorem~\ref{count} is another rare example of an enumeration result in which the correct order of magnitude of the lower order term is given. Moreover, in the case $h>2$ even the lower order term (in the exponent) is determined up to a $1+o(\log\log n/\log n)$ factor.

\subsection{Related results}
The past decade has witnessed rapid development in enumeration problems in combinatorics. In particular, a  related problem of enumerating \emph{additive} Sidon sets, i.e.~sets with distinct sums of pairs, and its generalisation to the so-called $B_h$-sets was studied by Dellamonica, Kohayakawa, Lee, R\"odl and Samotij~\cite{DKLRS16,DKLRS18,KLRS15}. For more recent results on enumerating sets with additive constraints, see e.g.~\cite{BLSh17, BLShT15, BLShT18, Gre04, Hancock-Staden-Treglown,Sap03,Tran}. Many of these counting results use the theory of hypergraph containers introduced by Balogh, Morris and Samotij~\cite{BMS15}, and independently by Saxton and Thomason~\cite{ST}. We refer the readers to~\cite{BMS15,ST} for more literature on enumeration problems on graphs and other settings.

\medskip

\noindent\textbf{Organisation of the paper.} Section~\ref{sec-prelim} sets up notations and tools needed for the proof. In Section~\ref{sec-pf}, we prove Theorem~\ref{count}. Some concluding remarks are given in Section~\ref{sec-conclude}.

\medskip

\noindent\textbf{Asymptotic notations.} Throughout the paper we will use the standard notation $\ll$, $\gg$ and respectively $O$ and $\Omega$ is applied to positive quantities in the usual way.
That is, $X \gg Y$, $Y \ll X$, $X = \Omega(Y )$ and $Y = O(X)$ all mean that $X \geq cY$,
for some absolute constant $c > 0$. If both $X \ll Y$ and $Y \ll X$ hold we write
$ X = \Theta(Y )$. If the constant $c$ depends on a quantity
$h$, we write $X \gg_h Y$, $Y = \Omega_h(Y )$, and so on.

\section{Preliminary lemmas}\label{sec-prelim}

Throughout the paper we will need some bounds on the prime-counting function $\pi(x).$ The following standard bound will be enough for our purposes:

\begin{lemma}\label{lem-prime-count} If $x$ is sufficiently large, then
$$\frac{x}{\log x}+\frac{x}{(\log x)^2}\leq \pi(x) \leq \frac{x}{\log x}+\frac{2x}{(\log x)^2}.$$
\end{lemma}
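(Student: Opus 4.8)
The plan is to derive both inequalities from the Prime Number Theorem together with the asymptotic expansion of the logarithmic integral, the crucial point being that the claimed lower bound already matches $\pi(x)$ to two orders, so one must control the \emph{sign} of the third-order correction. I would start from the Prime Number Theorem with the classical de la Vall\'ee Poussin error term,
\[
\pi(x)=\int_2^x \frac{dt}{\log t}+O\bigl(x\exp(-c\sqrt{\log x})\bigr),
\]
for some absolute constant $c>0$. The error here is $o\bigl(x/(\log x)^k\bigr)$ for every fixed $k$, since $\exp(-c\sqrt{\log x})$ decays faster than any fixed negative power of $\log x$; hence it is negligible at every scale that appears below.

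Next I would expand the logarithmic integral by repeated integration by parts (differentiating the $(\log t)^{-j}$ factor and integrating $dt$):
\[
\int_2^x \frac{dt}{\log t}=\frac{x}{\log x}+\frac{x}{(\log x)^2}+\frac{2x}{(\log x)^3}+6\int_2^x\frac{dt}{(\log t)^4}+O(1),
\]
where the $O(1)$ collects the contributions of the lower endpoint $t=2$. To estimate the remaining integral I would split the range at $\sqrt{x}$: on $[2,\sqrt{x}]$ the integrand is bounded, contributing $O(\sqrt{x})$, while on $[\sqrt{x},x]$ one uses $(\log t)^{-4}\le(\log\sqrt{x})^{-4}=16/(\log x)^4$, contributing $O\bigl(x/(\log x)^4\bigr)$. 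Combining with the Prime Number Theorem gives
\[
\pi(x)=\frac{x}{\log x}+\frac{x}{(\log x)^2}+\frac{2x}{(\log x)^3}+O\Bigl(\frac{x}{(\log x)^4}\Bigr).
\]

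Both displayed bounds then follow for large $x$. For the lower bound, the third-order term $\frac{2x}{(\log x)^3}$ is positive and, once $x$ is large, dominates the $O\bigl(x/(\log x)^4\bigr)$ error, so $\pi(x)\ge \frac{x}{\log x}+\frac{x}{(\log x)^2}$. For the upper bound, the gap $\frac{x}{(\log x)^2}$ between the asserted bound and the two leading terms dominates $\frac{2x}{(\log x)^3}+O\bigl(x/(\log x)^4\bigr)$ as soon as $\log x>2$, yielding $\pi(x)\le \frac{x}{\log x}+\frac{2x}{(\log x)^2}$.

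The only real subtlety is this lower bound: because it agrees with $\pi(x)$ through the $x/(\log x)^2$ term, the cruder estimate $\pi(x)=\frac{x}{\log x}+\frac{x}{(\log x)^2}+O\bigl(x/(\log x)^3\bigr)$ with an unsigned error would \emph{not} suffice, and one genuinely has to carry the expansion one term further and exploit the positivity of the $x/(\log x)^3$ coefficient. As an alternative that sidesteps the derivation, I could instead quote effective Rosser--Schoenfeld/Dusart-type inequalities such as $\frac{x}{\log x}\bigl(1+\frac1{\log x}+\frac2{(\log x)^2}\bigr)\le\pi(x)\le\frac{x}{\log x}\bigl(1+\frac1{\log x}+\frac{2.334}{(\log x)^2}\bigr)$, valid beyond an explicit threshold, from which both displayed inequalities are immediate for large $x$.
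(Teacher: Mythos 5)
Your proof is correct. The paper itself offers no argument for this lemma at all --- it is stated as ``the following standard bound'' and used as a black box --- so there is no proof to compare against; you have supplied one where the authors chose to cite folklore. Your route (Prime Number Theorem with the de la Vall\'ee Poussin error term, followed by three integrations by parts of $\int_2^x dt/\log t$ and a split of the remaining integral at $\sqrt{x}$) is the standard way to make the lemma rigorous, and the computation checks out: the coefficients $1,1,2$ in the expansion $\frac{x}{\log x}+\frac{x}{(\log x)^2}+\frac{2x}{(\log x)^3}+O\bigl(x/(\log x)^4\bigr)$ are right, and the boundary terms and the $O(\sqrt{x})$ contribution from $[2,\sqrt{x}]$ are correctly absorbed. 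Most importantly, you identify the one genuinely delicate point: the asserted lower bound agrees with $\pi(x)$ through the $x/(\log x)^2$ term, so an unsigned $O\bigl(x/(\log x)^3\bigr)$ error is useless and one must carry the expansion one step further to exploit the positive sign of the $2x/(\log x)^3$ term. That is exactly the reason the lemma is not a one-line consequence of the crudest form of the Prime Number Theorem. Your fallback of quoting effective Rosser--Schoenfeld/Dusart-type inequalities is also legitimate and is probably what the authors had in mind by ``standard.''
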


For proving Theorem~\ref{count} we will also use multiplicative bases and   two lemmas from \cite{PS}.

We say that 
the set $B \subseteq \mathbb{Z}^+$ forms a {\it multiplicative basis of order $h$} of a set $S$, if every element $ s \in S$ can
be written as the product of $h$ members of $B$. In particular, $B$ is a multiplicative basis of order $h$ for $[n]$ if $[n]\subseteq B^h$, that is, if each positive integer up to $n$ can be expressed as a product of $h$ (not necessarily distinct) elements of $B$.

We will consider multiplicative bases of minimum size, the existence of such ``small'' bases is guaranteed by the following lemma.

\begin{lemma}\label{MB}
Let $h\geq 2$ be an integer. There exists a multiplicative basis $B$ of order $h$ for $[n]$ of size
$$|B|=\pi(n)+O_h\left(\frac{n^{2/(h+1)}}{(\log n)^2}\right).$$
\end{lemma}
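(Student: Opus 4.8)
The plan is to exhibit an explicit basis of the form $B=\{1\}\cup\mathcal{Q}\cup C$, where $\mathcal{Q}$ is the set of all primes $p\le n$ and $C$ is a carefully chosen set of composite ``blocks'' with $|C|=O_h(n^{2/(h+1)}/(\log n)^2)$. The set $\mathcal{Q}$ alone contributes the main term $\pi(n)$, and it must be present: a prime can only be written as $p\cdot 1\cdots 1$, which forces $p\in B$ and $1\in B$. Fix the threshold $y:=n^{1/(h+1)}$ and call a prime \emph{large} if it exceeds $y$ and \emph{small} otherwise. Since $y^{h+1}=n$, every $m\le n$ has at most $h$ large prime factors. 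To represent $m\le n$ as a product of $h$ elements of $B$ I would partition the multiset of prime factors of $m$ into $h$ groups (``bins''), padding with $1$'s, so that each group product lies in $B$. Numbers whose number $\Omega(m)$ of prime factors (counted with multiplicity) is at most $h$ are already products of at most $h$ primes, so all the work lies in the case $\Omega(m)>h$.

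First I would fix the grouping rule. Place each of the (at most $h$) large primes in its own bin, and then distribute the small prime factors of $m$ among the $h$ bins so as to keep every bin product small; if $m$ has $\ell$ large primes then its $y$-smooth part $S$ satisfies $S\le n/y^{\ell}$, and a short computation shows each bin can be kept below $n^{2/(h+1)}$. The design goal is that the overwhelming majority of the resulting blocks are products of exactly two primes: whenever possible one merges a single small prime into a large-prime bin (producing a block $pq$ with $p\le y<q$) or pairs two small primes in a free bin. Writing $C$ for the set of all blocks that arise this way over all $m\le n$, one must then bound $|C|$.

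The decisive estimate concerns the \emph{critical blocks} $pq$ with $p\le y<q$, which arise e.g.\ when $m$ has large primes $q_1\le\cdots\le q_h$ and one extra small prime $p$. From $pq_1^{h}\le pq_1\cdots q_h=m\le n$ we get $q_1\le (n/p)^{1/h}$, so these blocks lie in $\{pq: p\le y,\ y<q\le (n/p)^{1/h}\}$, of size $\sum_{p\le y}\pi((n/p)^{1/h})$. Evaluating this with Lemma~\ref{lem-prime-count} and partial summation, $\sum_{p\le y}p^{-1/h}\ll y^{1-1/h}/\log y$, and the exponents combine to $\tfrac{1}{h}+\tfrac{h-1}{h(h+1)}=\tfrac{2}{h+1}$, giving size $\Theta_h(n^{2/(h+1)}/(\log n)^2)$. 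Crucially, summing over \emph{primes} $p$ rather than over all smooth cofactors supplies the extra factor $1/\log n$ separating $n^{2/(h+1)}/(\log n)^2$ from the naive $n^{2/(h+1)}/\log n$. The remaining blocks, namely prime powers (which I would include up to $n^{1/h}$, contributing only $O(n^{1/h}/\log n)$) and $y$-smooth blocks with several prime factors, all stem from integers with many prime factors near the threshold and should contribute a lower-order count.

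The main obstacle is exactly this last point: keeping $|C|$ at $n^{2/(h+1)}/(\log n)^2$ rather than $n^{2/(h+1)}/\log n$ or even $n^{2/(h+1)}$. Merging the \emph{entire} smooth part $S$ into one bin would force $C$ to contain $qS$ for all smooth $S$, and $\sum_S S^{-1/h}$ over all (not merely prime) cofactors loses a logarithm. The fix is to \emph{balance} the small prime factors of $S$ across the $h$ bins, so that almost every block is a product of two primes of controlled size; one must then verify that the genuinely exceptional $m$, whose $y$-smooth part already has more than $h$ prime factors and is thus a smooth number with many factors, are rare enough (by a Dickman/Landau-type count of smooth numbers and almost-primes) that the blocks they generate are negligible against $n^{2/(h+1)}/(\log n)^2$. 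Combining the three counts yields $|B|=\pi(n)+O_h(n^{2/(h+1)}/(\log n)^2)$.
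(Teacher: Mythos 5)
The paper itself does not prove this lemma: it is imported verbatim as Theorem~1 of Pach--S\'andor \cite{PS}, so any self-contained argument is doing more than the paper does. Your skeleton is essentially the one underlying that reference --- $B=\{1\}\cup\{\text{primes}\le n\}\cup C$ with $C$ a small set of composite blocks --- and your count of the critical blocks is correct and really is the computation producing the $n^{2/(h+1)}/(\log n)^2$ term: with $y=n^{1/(h+1)}$ one has $\pi\bigl((n/p)^{1/h}\bigr)\ll_h (n/p)^{1/h}/\log n$ uniformly for $p\le y$, and $\sum_{p\le y}p^{-1/h}\ll_h y^{(h-1)/h}/\log y$, with exponents combining to $2/(h+1)$.

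However, there is a genuine gap exactly where you flag ``the main obstacle,'' and the repair you propose would not close it. For integers $m$ whose $y$-smooth part has several prime factors you never pin down the partition rule, hence never actually define $C$, and the set of candidate multi-prime blocks is too large to take wholesale. Already for $h=2$ and $m=p_1p_2p_3q$ with $p_1\le p_2\le p_3\le n^{1/3}<q$ and $q>(n/p_1)^{1/2}$, every factorization $m=b_1b_2$ uses either the block $p_1p_2p_3$ or a block of the form $q\cdot d$ with $d>1$, and the latter family (over all such $m$) has size comparable to $\pi(n)$; so certain products of three primes must go into $C$. But the set of \emph{all} products of three primes up to $n^{2/3}$ has cardinality $\Theta\bigl(n^{2/3}(\log\log n)^2/\log n\bigr)$ by Landau's theorem, exceeding the budget by a factor of about $\log n\,(\log\log n)^2$; likewise the $n^{1/3}$-smooth numbers up to $n^{2/3}$ have positive density among integers of that size, so a Dickman/Landau rarity count cannot help. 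The saving does not come from such $m$ being rare --- they are not --- but from choosing, for each $m$, a partition whose blocks satisfy constraints of the type $p_2p_3\le(n/p_1)^{1/2}$ (which, for instance, confines the forced triple blocks below $n^{3/5}$), and then redoing the constrained prime-sum count for each block type. That case analysis is the actual content of the proof in \cite{PS} and is missing from your sketch.
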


\begin{proof}
This follows from Theorem~1 of \cite{PS}.
\end{proof}

Finally, the lemma below describes a connection between a set possessing property $\mathcal{P}_h$ and a multiplicative basis of order $h$.

\begin{lemma}\label{inj}
Let $A\subseteq [n]$ be a set possessing property $\mathcal{P}_h$ and  $B\subset [n]$ be a multiplicative basis of order $h$ for $[n]$.
Then there exists an injective mapping $\varphi: A\to B$ such that for $\varphi(a)=b$ there exist integers $b_2,\dots ,b_h\in B$ such that $a=bb_2\dots b_h$.
\end{lemma}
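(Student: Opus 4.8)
The plan is to realise $\varphi$ as a matching in an auxiliary bipartite graph and to verify the marriage condition using property $\mathcal{P}_h$. Form the bipartite graph $G$ on vertex classes $A$ and $B$ in which $a\in A$ is joined to $b\in B$ precisely when $b\mid a$ and $a/b\in B^{h-1}$; call this neighbourhood $N(a)$. Since $B$ is a basis of order $h$, every $a\in[n]$ has a representation $a=c_1\cdots c_h$ with $c_i\in B$, and each factor $c_i$ satisfies $c_i\mid a$ and $a/c_i\in B^{h-1}$, so $c_i\in N(a)$; in particular $N(a)\neq\varnothing$ and, more importantly, every order-$h$ representation of $a$ uses only elements of $N(a)$. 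An injection $\varphi$ with the required property is exactly a matching of $G$ saturating $A$, so by Hall's theorem it suffices to prove $|N(A')|\ge |A'|$ for every $A'\subseteq A$, where $N(A')=\bigcup_{a\in A'}N(a)$.

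To verify Hall's condition I would argue by contradiction, taking a minimal $A'$ with $|N(A')|<|A'|$; minimality forces $|N(A')|=|A'|-1$. Write $B'=N(A')$ and $m=|B'|$, so $A'$ consists of $m+1$ distinct integers. By the observation above, every $a\in A'$ is a product of exactly $h$ elements of $B'$, i.e. $A'\subseteq (B')^h$. Encoding each $a\in A'$ by its exponent vector over $B'$ (an element of $\mathbb{Z}_{\ge 0}^{B'}$ of coordinate-sum $h$) gives $m+1$ vectors in an $m$-dimensional space, hence a nontrivial linear dependence; clearing denominators and separating signs yields disjoint nonempty $P,Q\subseteq A'$ (both nonempty, by homogeneity of the degree $h$) and positive integers with $\prod_{a\in P}a^{\lambda_a}=\prod_{a\in Q}a^{\mu_a}$. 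Choosing any $a_0\in Q$ then gives $a_0\mid \prod_{a\in P}a^{\lambda_a}$, a divisibility relation whose multiplicands are distinct elements of $A'\setminus\{a_0\}$.

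The target is a genuine $\mathcal{P}_h$-violation, and here a useful reduction is that one only needs $a_0$ to divide a product of at most $h$ \emph{distinct} elements of $A$ other than $a_0$: any such relation can be padded to a product of exactly $h$ distinct elements using further members of $A$ (legitimate whenever $|A|\ge h+1$, the tiny cases $|A|\le h$ making $\mathcal{P}_h$ vacuous and the injection trivial), contradicting property $\mathcal{P}_h$. Thus it remains to convert $a_0\mid\prod_{a\in P}a^{\lambda_a}$ into a relation with at most $h$ distinct divisors, for instance by producing a system of distinct representatives that assigns to each of the $h$ basis-factors $c_i$ of $a_0$ a distinct element of $A'$ divisible by $c_i$.

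This last conversion is the main obstacle, and it is where the full strength of the hypothesis that $B$ is a basis for \emph{all} of $[n]$ (not merely for $A'$) must enter. The difficulty is most visible when $m=|N(A')|$ is smaller than $h$: then $A'$ does not even contain $h+1$ elements, so no $\mathcal{P}_h$-violation can live inside $A'$, and Hall's condition can hold only because representing every integer up to $n$ as a product of $h$ elements of $B$ forces $B$, and hence $N(A')$, to contain enough additional composite factors to make $|N(A')|$ large. I expect the bulk of the work to be a careful reconciliation of the multiplicities $\lambda_a$ with the bound $h$, together with handling these small-neighbourhood configurations by exploiting the richness that the basis property imposes on $B$.
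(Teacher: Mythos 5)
Note first that the paper does not prove this lemma itself: its ``proof'' is a one-line citation to Lemma~12 of \cite{PS}. So the relevant question is simply whether your blind attempt constitutes a complete proof, and it does not. The Hall's-theorem framing and the reduction to a minimal violating set $A'$ with $|N(A')|=|A'|-1$ are reasonable, and the linear-algebra step correctly produces a multiplicative relation $\prod_{a\in P}a^{\lambda_a}=\prod_{a\in Q}a^{\mu_a}$ with $P,Q$ disjoint and nonempty. But the step you yourself flag as ``the main obstacle'' is exactly the crux of the lemma, and it is left undone. Property $\mathcal{P}_h$ only forbids $a_0\mid a_1a_2\cdots a_h$ with \emph{distinct} elements each appearing \emph{once}; the relation you derive gives $a_0\mid\prod_{a\in P}a^{\lambda_a}$, where $|P|$ can be much larger than $h$ and the exponents $\lambda_a$ can be large. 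There is no general way to strip exponents from a divisibility relation ($8\mid 2^3$ but $8\nmid 2$), and padding with further elements of $A$ only ever \emph{adds} factors --- it cannot reduce the number of distinct factors below $|P|$ nor eliminate multiplicities. So the linear-dependence route, as described, does not reach a contradiction with $\mathcal{P}_h$, and the argument stalls precisely where the real work lies.

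Two further points. The system-of-distinct-representatives idea you sketch at the end is in fact the right shape of a finish: if one could match the $h$ basis factors $c_1,\dots,c_h$ of $a_0$ to distinct $a_1,\dots,a_h\in A\setminus\{a_0\}$ with $c_i\mid a_i$, then $a_0=c_1\cdots c_h\mid a_1\cdots a_h$ and you would be done. But you give no argument that such an SDR exists, and it is genuinely problematic when the representation of $a_0$ has repeated factors (e.g.\ $a_0=c^h$ requires $h$ distinct multiples of $c$ in $A$, which nothing in your setup supplies). Finally, the aside that the injection is ``trivial'' when $|A|\leq h$ is also unjustified: Hall's condition still has to be verified for small sets, and the verification there cannot invoke $\mathcal{P}_h$ at all. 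In short, the skeleton is plausible but the decisive combinatorial step --- which is the entire content of \cite[Lemma~12]{PS} --- is missing.
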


\begin{proof}
This is a special case of Lemma~12 in \cite{PS}.
\end{proof}

\section{Proof of Theorem~\ref{count}}\label{sec-pf}

\subsection{}
First we consider the case $h=2.$

\subsubsection{Lower bound.}
For obtaining the lower bound we will use linear hypergraphs. Let us recall that a  hypergraph is linear if each pair of hyperedges intersects in at most one vertex.

Let us consider sets of the form $A=A_1\cup A_2\subseteq [n]$, where:
\begin{itemize}
    \item In $A_1$ each element has a prime factor from $(\sqrt{n},n]$ and for every prime $p\in (\sqrt{n},n]$ the number of multiples of $p$ contained in $A_1$ is at most one.
    \item The set $A_2$ can be obtained in the following way. We take a 3-uniform linear hypergraph $G$ with vertex set $V=\{p:\text{$p$ is a prime},p\in(n^{1/3}/2, n^{1/3})\}$ and edge set $E$. Let $A_2=\{pqr:\{p,q,r\}\in E\}$. That is, $A_2$ contains integers that can be written as a product of three distinct primes such that these three primes form a hyperedge of $G$.
\end{itemize}

We claim that $A=A_1\cup A_2$ possesses property $\mathcal{P}_2$. If $a_0\in A_1$, then there is a prime $p\in (\sqrt{n},n]$ such that $p\mid a_0 $ and $p\nmid a$, if $a\in A\setminus \{a_0\}$, so $a_0\nmid a_1a_2$. If $a_0\in A_2$, then $a_0=pqr$ for three primes $p,q,r\in (n^{1/3}/2,n^{1/3})$. Assume that $a_0\mid a_1a_2$ for some $a_1,a_2\in A\setminus \{a_0\}$. We can assume that $a_1$ is divisible by at least two of the primes $p,q,r$, for instance, $pq\mid a_1$. If $a_1\in A_2$, then this contradicts the linearity of $G$. If $a_1\in A_1$, then $a_1$ has one prime factor larger than $\sqrt{n}$ and two prime factors ($p$ and $q$) from $(n^{1/3}/2,n^{1/3})$, which is a contradiction again.

Hence, each set that can be obtained as $A=A_1\cup A_2$ possesses property $\mathcal{P}_2$. Note that for different pairs $(A_1,A_2)$ we get different sets $A=A_1\cup A_2$.

The number of choices for $A_1$ is 
$$T(n)=\prod\limits_{\substack{\sqrt{n}<p\leq n, \\ p \text{ prime} }} (\lfloor n/p \rfloor +1),$$
since for each prime $p\in (\sqrt{n},n]$ we can include in $A_1$ either one of the $[n/p]$ multiples of $p$ (up to $n$) or none of them, the choices for different primes are independent from each other.

The number of choices for $A_2$ is the number of linear 3-uniform hypergraphs on  vertex set $V$. By dropping out at most three elements from $V$ we get a set of cardinality $|V'|\equiv 1,3\pmod{6}$.  
The number of linear 3-uniform hypergraphs on $V$ is at least the number of Steiner Triple Systems on $V'$ which is known \cite{Wilson}  to be $2^{\Theta(|V'|^2\log|V'|)}$.

Hence, the following lower bound is obtained:
$$\prod\limits_{\substack{\sqrt{n}<p\leq n, \\ p\text{ prime} }} (\lfloor n/p \rfloor +1)\cdot e^{\Theta(n^{2/3}/\log n)}\leq H_2(n).$$

\subsubsection{Upper bound.}

Now, we continue with the upper bound. 
According to Lemma~\ref{MB} there exists a multiplicative basis $B=P\cup X$ of order 2, where $$P=\{p: p\text{ is a prime},p\in (\sqrt{n},n]\}$$
and 
$$|X|\ll n^{2/3}/(\log n)^2.$$
(Note that a multiplicative basis for $[n]$ must contain all the primes up to $n$, therefore the above $P$ is a subset of any multiplicative basis $B$, and we can set $X:=B\setminus P$ for a multiplicative basis $B$ of minimum size.)

Also, by Lemma~\ref{inj}, if $A$ possesses $\mathcal{P}_2$, then there is an injective mapping $\varphi:A\to B$, such that for any $\varphi(a)=b$ we have $b\mid a$. Let $A_P,$ resp. $A_X$, be the set of elements mapped (by $\varphi$) to $P$, resp. $X$:
$$A_P=\varphi^{-1}(P), \quad A_X=\varphi^{-1}(X).$$

The number of choices for $A_P$ is at most
\begin{equation*}
\prod\limits_{\substack{\sqrt{n}<p\leq n, \\ p\text { prime}}} (\lfloor n/p \rfloor +1)=T(n).
\end{equation*}

As $|A_X|= |X|\ll n^{2/3}/(\log n)^2$, the number of choices for $A_X$ is at most $2^{O(n^{2/3}/\log n)}$, therefore, the number of choices for $A=A_P\cup A_X$ is at most 
$$T(n)\cdot e^{O(n^{2/3}/\log n)},$$
as needed.

\subsection{}
Now, let $h\geq 3$ be any integer.

\subsubsection{Lower bound.}

Let us consider sets $A$ where each element has {\it exactly one} prime factor from $(\frac{\sqrt{n}}{\log n},n]$ and for every prime $p\in (\frac{\sqrt{n}}{\log n},n]$ the number of multiples of $p$ contained in $A$ is at most one.

Note that these sets satisfy the required property for {\it every} $h$. Indeed, let $a_0,a_1,\dots,a_h\in A$ be distinct, then there exists a prime $p\in (\frac{\sqrt{n}}{\log n},n]$ which divides $a_0$ and does not divide any of $a_1,a_2,\dots,a_h$, thus $a_0\nmid a_1a_2\dots a_h$.

Let us give a lower bound on the number of these sets.

If $p\in [{\sqrt{n}}{\log n},n]$, then the number of choices (for the multiple of $p$) is $[n/p]+1$ (it is also possible that none of the multiples of $p$ is chosen to be in $A$). These can be chosen independently, since none of them is divisible by any other prime from $(\frac{\sqrt{n}}{\log n},n]$.

Now, if $p\in (\frac{\sqrt{n}}{\log n},{\sqrt{n}}{\log n})$, then we have to exclude those multiples of $p$ that have another prime factor $q\in (\frac{\sqrt{n}}{\log n},{\sqrt{n}}{\log n})$.

Note that $q\leq n/p$. That is, the number of choices (for sufficiently large $n$) is at least \begin{equation*}[n/p]+1-\sum\limits_{\substack{\frac{\sqrt{n}}{\log n}<q\leq n/p, \\ q \text{ prime}}}
\frac{[n/p]}{q}\geq \left([n/p]+1\right) \left(1- \frac{5\log\log n}{\log n}\right),
\end{equation*}
since 
$$\sum\limits_{\substack{\frac{\sqrt{n}}{\log n}<q\leq n/p, \\ q \text{ prime}}} \frac{1}{q}\leq \sum\limits_{\substack{\frac{\sqrt{n}}{\log n}<q\leq {\sqrt{n}}{\log n},  \\ q \text{ prime}} } \frac{1}{q}\leq \frac{5\log\log n}{\log n},$$
according to Mertens' theorem which states that for some constant $M>0$ we have 
$$\sum\limits_{\substack{q\leq x, \\ q\text{ prime}}}\frac1q=\log\log x + M+O\left(\frac{1}{\log x} \right).$$

The choices are independent from each other.

Hence, the number of choices for the set $A$ (for sufficiently large $n$) is at least
\begin{equation}\label{eq7}
\left( 1-\frac{5\log\log n}{\log n} \right)^{\pi(\sqrt{n}\log n)-\pi(\sqrt{n}/\log n)}\prod\limits_{\substack{\sqrt{n}/\log n<p\leq n, \\ p \text{ prime}}} \left([n/p]+1\right).
\end{equation}

Observe that for large enough $n$
\begin{equation}\label{eq1}
1-\frac{5\log\log n}{\log n}\geq e^{-\frac{11\log\log n}{2\log n}},    
\end{equation} 
since for sufficiently small positive $x$ we have $1-x\geq e^{-1.1x}$. Also, 
Lemma~\ref{lem-prime-count} yields (for large enough $n$) that 
\begin{equation}
\label{eq2}
\pi(\sqrt{n}\log n)-\pi(\sqrt{n}/\log n)\leq 2\sqrt{n}.    
\end{equation}
According to \eqref{eq1} and \eqref{eq2} we obtain that
$$\left( 1-\frac{5\log\log n}{\log n} \right)^{\pi(\sqrt{n}\log n)-\pi(\sqrt{n}/\log n)}\geq e^{-\frac{11\sqrt{n}\log\log n}{\log n}}. $$

Thus by using Lemma~\ref{lem-prime-count} again 
we obtain that for large enough $n$ 
$$\prod\limits_{\substack{ \sqrt{n}/\log n<p\leq n, \\ p \text{ prime}}} ([n/p]+1)\geq T(n)\cdot \sqrt{n}^{\pi(\sqrt{n})-\pi(\sqrt{n}/\log n)}\geq T(n)\cdot e^{\sqrt{n}}.$$

Therefore, \eqref{eq7} yields that
$$H_h(n)\geq T(n)e^{\sqrt{n}}e^{-\frac{11\sqrt{n}\log\log n}{\log n}}.$$

\subsubsection{Upper bound.}

According to Lemma~\ref{MB} there exists a multiplicative basis $B=P\cup X$ of order $h$, where $$P=\{p: p\text{ is a prime},p\in (n^{2/(h+1)}/\log n,n]\}$$
and 
$$|X|\ll n^{2/(h+1)}/(\log n)^2.$$

Also, by Lemma~\ref{inj}, if $A\in \mathcal{P}_h([n])$, then there is an injective mapping $\varphi:A\to B$ satisfying that if $\varphi(a)=b$, then $b\mid a$. Let $A_P$, resp. $A_X$, be the set of elements mapped to $P$, resp. $X$:
$$A_P=\varphi^{-1}(P), \quad A_X=\varphi^{-1}(X).$$

The number of choices for $A_P$ is at most \begin{equation}
\label{AP1}    
\prod\limits_{\substack{n^{2/(h+1)}/\log n<p\leq n, \\
 p \text{ prime} }}
 (\lfloor n/p \rfloor +1)=T(n)\cdot \prod\limits_{\substack{n^{2/(h+1)}/\log n<p\leq \sqrt{n}, \\
 p \text{ prime} }} (\lfloor n/p \rfloor +1) .
\end{equation}

Observe that by Lemma~\ref{lem-prime-count} for large enough $n$
\begin{equation}
\label{AP2}
\prod\limits_{\substack{n^{2/(h+1)}/\log n<p\leq \sqrt{n}/\log n, \\
 p \text{ prime} }} (\lfloor n/p \rfloor +1)\leq n^{3\sqrt{n}/(\log n)^2}=e^{3\sqrt{n}/\log n}
\end{equation}
and

\begin{align}
\label{AP3}
\prod\limits_{\substack{\sqrt{n}/\log n< p\leq \sqrt{n}, \\
 p \text{ prime} }} (\lfloor n/p \rfloor +1)&\leq (\sqrt{n}\log n+1)^{\pi(\sqrt{n})}\leq e^{((\log n)/2+\log\log n+1)(2\sqrt{n}/\log n+8\sqrt{n}/(\log n)^2)}\\
&\leq e^{\sqrt{n}}e^{3\sqrt{n}\log\log n/\log n}.
\end{align}

As $|A_X|\leq |X|\ll n^{2/(h+1)}/(\log n)^2$, the number of choices for $A_X$ is at most $2^{O(n^{2/(h+1)}/\log n)}\leq e^{O(\sqrt{n}/\log n)}$, therefore, by \eqref{AP1}, \eqref{AP2} and \eqref{AP3} the number of choices for $A=A_P\cup A_X$ is -- assuming that $n$ is sufficiently large -- at most 
$$T(n)\cdot e^{\sqrt{n}}e^{4\sqrt{n}\log\log n/\log n}.$$

\begin{remark}
The obtained estimation is more precise in the case $h\geq 3$. Let us explain the reason for this, and look at the main difference between the cases $h=2$ and $h\geq 3$. The contribution from the product $\prod (\lfloor n/p\rfloor+1)$, where $p$ is taken from the interval, say, $(t,2t)$ with $t\sim n^{\alpha}$ is $e^{\Theta(n^\alpha)}$. Because of this, in case of $h=2$ we could cut at any $n^\alpha$ with $\alpha\in [1/3,2/3)$ in the definition of $A_1$, since the contribution from the product taken for $p\in(n^{1/3},n^{2/3-\varepsilon})$ is negligible compared to the contribution of $A_2$. To achieve better bounds, one would have to study and understand the count corresponding to the ``$A_2$-part'' better. 

The analogue of $A_2$ could also be considered in the case $h\geq 3$ (by taking $(h+1)$-uniform linear hypergraphs on $V=\{p:\ p\text{ is a prime},\ p\in (n^{1/h}/2,n^{1/h})\}$), but this would only give an $e^{\Theta(n^{2/(h+1)}/\log n)}$ factor. While the term $\Theta(n^{2/(h+1)/\log n})$ in the exponent turns out to be the second order term in the case $h=2$, for $h\geq 3$ it is negligible compared to the additional contribution obtained by cutting lower than $\sqrt{n}$ in the definition of $A_1$. As it turns out from the calculation, we already get the precise lower order term if we cut at $\sqrt{n}/\log n$, though some additional care was needed, as an element of $[n]$ might have more than one prime factors larger than $\sqrt{n}/\log n$. 

\end{remark}

\section{Concluding remarks}\label{sec-conclude}

In this paper, we determine the number of those subsets of $[n]$ that possess property $\mathcal{P}_h$, giving bounds that are optimal up to a constant factor in the exponent of the lower order term $e^{\Theta\left(\frac{n^{2/3}}{\log n}\right)} $ for $h=2$ and $e^{\sqrt{n}(1+o(1))}$ for $h>2$.

A natural extension of these results would be to count those subsets of $[n]$ that satisfy the following property. Let us say that $A$ satisfies property $\mathcal{P}_{r,s}$, if there are no distinct
elements $a_1, a_2, \dots, a_{r+s} \in A$ with $a_1\dots a_r \mid a_{r+1}\dots a_{r+s}$. However, even determining the extremal size for subsets of $[n]$ seem to be difficult. The smallest interesting case is $r=2,s=3$; in this case we know that the largest possible size of a subset of $[n]$ possessing property $\mathcal{P}_{2,3}$ is between $\pi(n)+n^{1/2+o(1)}$ and $\pi(n)+n^{2/3+o(1)}$.

\section{Acknowledgements}

Both authors were supported by the Lend\"ulet program of the Hungarian Academy of Sciences (MTA). PPP was also supported by the National
Research, Development and Innovation Fund (TUDFO/51757/2019-ITM,
Thematic Excellence Program). RP was also supported by  the BME-Artificial Intelligence FIKP grant of EMMI (BME FIKP-MI/SC).

The authors would like to thank  Jaehoon Kim and Hong Liu for pointing out references about counting Steiner Triple Systems.

\end{document}